\newtheorem{theorem}{Theorem}
\newtheorem{lemma}{Lemma}
\newtheorem{remark}{Remark}
\title{\LARGE \bf
On Optimal Jamming Over an Additive Noise Channel
}
\author{  Emrah Akyol, Kenneth Rose, and  Tamer Ba\c{s}ar
\thanks{Emrah Akyol and Kenneth Rose are supported in part by the NSF under grants 
CCF-1016861 and CCF-1118075.}
\thanks{Emrah Akyol and Kenneth Rose are with Department of Electrical and Computer Engineering, University of California at Santa Barbara, CA 93106, USA
         {\tt\small \{eakyol, rose\}@ece.ucsb.edu}}%
\thanks{Tamer Ba\c{s}ar is with Department of Electrical and Computer Engineering, University of Illinois at Urbana-Champaign, 1308
        West Main Street, Urbana, IL 61801, USA
        {\tt\small basar1@illinois.edu}}%
}
\begin{document}

\maketitle
\thispagestyle{empty}
\pagestyle{empty}

\begin{abstract}
 This paper considers the problem of optimal zero-delay jamming over an additive noise channel. Early work had already solved this problem for a Gaussian source and channel. Building on a sequence of recent results on conditions for linearity of optimal estimation, and of optimal mappings in source-channel coding, we derive the saddle-point solution to the jamming problem for general sources and channels, without recourse to Gaussian assumptions. We show that linearity conditions play a pivotal role in jamming, in the sense that the optimal jamming strategy is to effectively force both transmitter and receiver to default to linear mappings, i.e., the jammer ensures, whenever possible, that the transmitter and receiver cannot benefit from non-linear strategies. This result is shown to subsume the known result for Gaussian source and channel. We analyze conditions and general settings where such unbeatable strategy can indeed be achieved by the jammer. Moreover, we provide the procedure to approximate optimal jamming in the remaining (source-channel) cases where the jammer cannot impose linearity on the transmitter and the receiver.
\end{abstract}

\section{Introduction}

The interaction between communication and control has been an important research area for decades. We consider the problem of optimal jamming, by a power constrained agent, over additive noise channel. This problem was already solved in \cite{basar1983gaussian,basar1985complete} for Gaussian source and channel. The saddle point solution to this zero-sum game, derived for Gaussian source-channel pair, involves randomized linear mapping for the transmitter and generating independent, Gaussian noise as the jammer output and a linear decoder. In this paper, by leveraging the recent results on conditions for linearity of optimal estimation and communication mappings, \cite{akyol2012conditions, mapping}, we extend the analysis to non-Gaussian sources and channels.  The contributions of this paper are: 
\begin{itemize}
\item We show that linearity is essential in the jamming problem. The jammer, whenever possible, forces the transmitter and the receiver to be linear. In the Gaussian source-channel setting, this effect corresponds to generating a Gaussian jamming noise, while in general the optimal jamming noise is not Gaussian. 
\item We derive the necessary and sufficient condition (called the ``matching condition") on the jamming noise density to ensure linearity of the optimal transmitter and the receiver.
\item Based on the matching condition, we derive asymptotic (in channel signal-to-noise ratio (CSNR)),  optimal jamming strategies. 
\item We present a numerical method to approximate the optimal jammer strategy, in cases where a matching jamming density does not exist and it cannot force the transmitter and receiver to be exactly linear. 
\end{itemize}

  \begin{figure}
\centering
\includegraphics[scale=0.4]{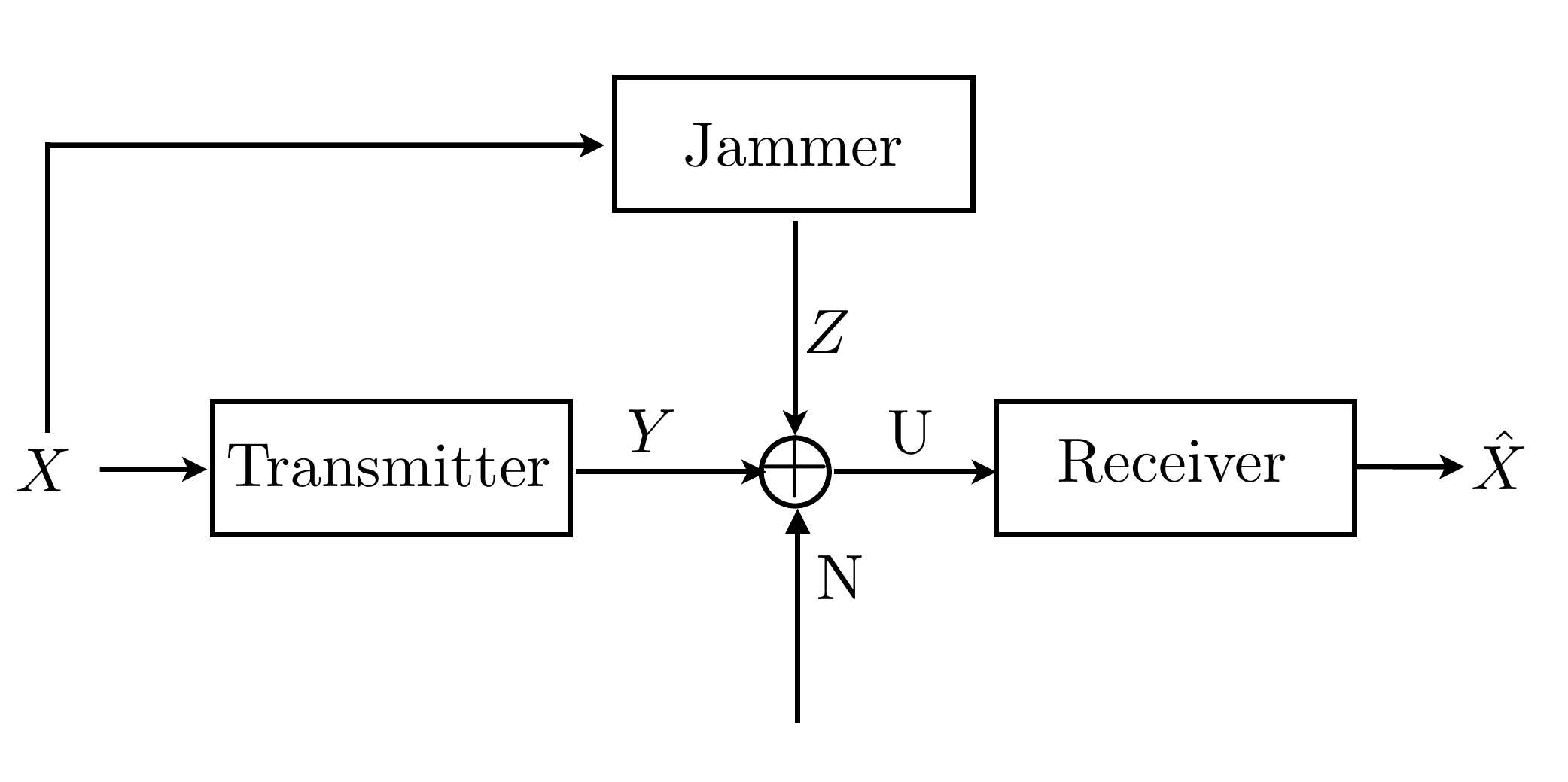}
\caption{The jamming problem.}
\label{jammingfig}
\end{figure}

This paper is organized as follows. In Section II, we present the problem definition and preliminaries. In Section III, we review the prior results related to jamming, estimation and communication problems. In Section IV, we derive the linearity result, which leads to our main result in Section V. In Section VI, we study the implications of the main result, and in Section VII, we present a procedure to approximate the optimal jamming density in the non-matching case. Finally, we discuss the future directions in Section VIII.

\section{Problem Definition}
Let $\mathbb R$ and $\mathbb R^+$ denote the respective sets of real numbers and positive real numbers. Let  $\mathbb E(\cdot)$,  $\mathbb P(\cdot)$ and $\ast$ denote the expectation, probability and convolution operators, respectively. Let $Bern(p)$ denote the Bernoulli random variable, taking values in $\{-1,1\}$ with probability $\{p,1-p\}$. The Gaussian density with mean $ \mu$ and variance  $\sigma^2$ is denoted as $\mathcal N( \mu,\sigma^2)$. Let $f^{'} (x)= \frac{d f(x)}{d x}$ denote the first order derivative of the function $f(\cdot)$.  Let $\delta(\cdot,\cdot)$ denote the Kronecker delta function. All logarithms in the paper are natural logarithms and may in general be complex, and the integrals are, in general, Lebesgue integrals.  Let us define the set $\cal S$ as the set of  Borel measurable $\mathbb R \rightarrow \mathbb R$ mappings. 

We consider the general communication system whose block diagram is shown in Figure 1. A scalar zero mean\footnote{The zero mean assumption is not necessary, but it considerably simpliÞes the notation. Therefore, it is made throughout the paper.}  source ${ X} \in\mathbb R$ is mapped into ${ Y}\in  \mathbb R$ by function ${ g_T (\cdot)}\in {{\cal {S}}}$ and transmitted over an additive noise channel. The adversary  receives the same signal $X$ and generates the jamming signal $Z$ through function ${g_A(\cdot)}\in {{\cal {S}}}$  which is added to the channel output, and aims to compromise the transmission of the source. The received signal $ {U}= Y+ Z+ N  $ is  mapped by the decoder to an estimate $ {\hat X}$ via function ${ h(\cdot)} \in {{\cal {S}}}$. The zero mean  noise $ N$ is assumed to be independent of the source $X$. The  source density is denoted $f_X(\cdot)$ and the noise density is $f_N(\cdot)$ with characteristic functions $F_X( \omega)$ and $F_N( \omega)$, respectively. We assume that the source and the noise variances are finite, i.e.,  $\sigma_X^2=\mathbb E\{X^2\} < \infty$ and  $\sigma_N^2=\mathbb E\{N^2\}<\infty$.

The overall cost, measured as the mean squared error (MSE) between  source $X$ and its estimate at the decoder $\hat X$, is a function of the transmitter, jammer and the receiver mappings: 
\begin{equation}
J(g_T(\cdot),g_A(\cdot),h(\cdot))=\ \mathbb E \{(X-{\hat X})^2\}.
\end{equation}

Transmitter $g_T(\cdot): \mathbb R\rightarrow\mathbb  R$ and receiver $h(\cdot): \mathbb R\rightarrow\mathbb  R$ seek to minimize this cost while the adversary (jammer) seeks to maximize it by appropriate  choice of $g_A(\cdot): \mathbb R\rightarrow \mathbb R$. Power constraints must be satisfied by the transmitter
\begin{equation}
\mathbb E\{g_T^2(X)\} \leq P_T,
\end{equation}
 and jammer
 \begin{equation}
\mathbb E\{g_A^2(X)\} \leq P_A.
\end{equation}
 
The conflict of interest underlying this problem implies that the optimal transmitter-receiver-adversarial policy is the saddle point solution ($g_T^*(\cdot),g_A^*(\cdot),h^*(\cdot)$) satisfying the set of inequalities
\begin{equation}
\label{saddle}
J(g_T^*,g_A^{},h^*) \leq J(g_T^{*},g_A^{*},h^*) \leq J(g_T^{},g_A^{*},h) .
\end{equation} 


\section{Prior Work}
The jamming problem, in the form defined here, was studied in \cite{basar1983gaussian,basar1985complete}, for Gaussian sources and channels. The problem of interest is intrinsically connected to the fundamental problems of estimation theory and the theory of zero-delay source-channel coding. In particular, conditions for linearity of optimal estimation\cite{akyol2012conditions}  and optimal mappings in communications \cite{mapping} are relevant to our problem here. We start with the estimation problem. 

\subsection{Estimation Problem}

\begin{figure}
\centering
\includegraphics[scale=0.5]{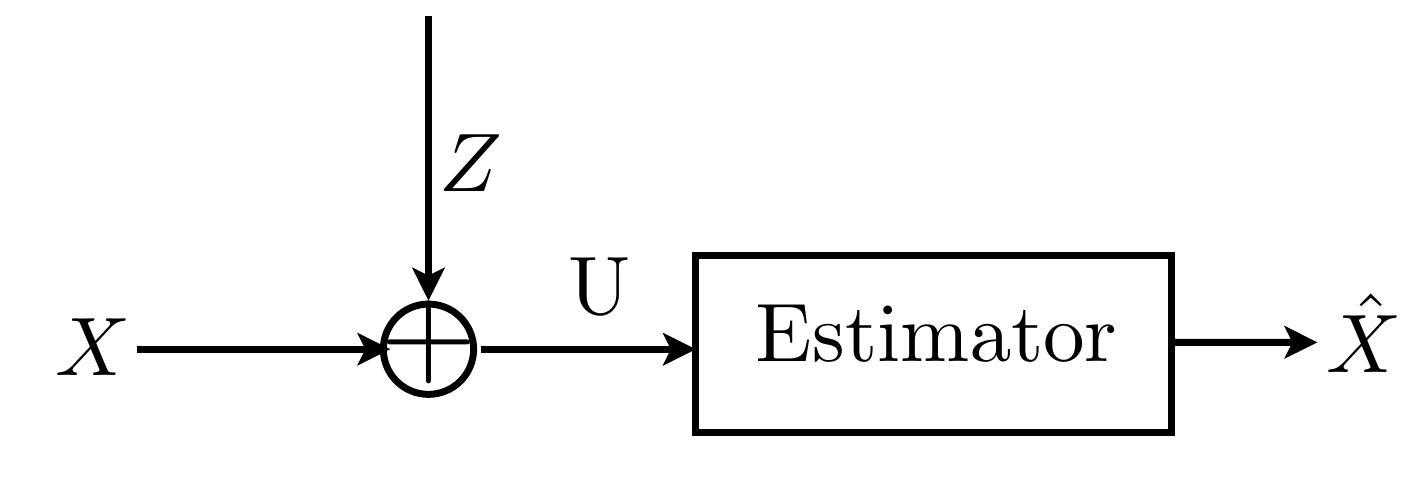}
\caption{The estimation problem.}
\label{est}
\end{figure}

Consider the setting in Figure 2. The estimator receives $U$, the noisy version of the source $X$ and generates the estimate $\hat X$ by the function $h: \mathbb R\rightarrow \mathbb R$ such that MSE, $\mathbb E\{(X-\hat X)^2\}$ is minimized. It is well known that, when a Gaussian source is contaminated with Gaussian noise, a linear estimator minimizes MSE. Recent work \cite{akyol2012conditions} analyzed, more generally, the conditions for linearity of optimal estimators. Given a noise (or source) distribution, and a specified signal to noise ratio (SNR),  conditions for existence and uniqueness of a source (or noise) distribution for which the optimal estimator is linear were derived. Also, the asymptotic linearity of the optimal estimators was shown for low SNR if the channel is Gaussian regardless of the source and, vice versa, for high SNR if the source is Gaussian regardless of the channel.

Here, we present the basic result pertaining to the jamming problem considered here. Specifically, we present the necessary and sufficient condition for source and channel distributions such that the linear estimator  $h(U)=\frac{\kappa}{\kappa+1} U$ is optimal where $\kappa=\frac{\sigma_X^2}{\sigma_Z^2}$ is the SNR.

\begin{theorem}[\cite{akyol2012conditions}]
Given SNR level $\kappa$, and noise $Z$ with  characteristic function $F_Z(\omega)$, there exists a source $X$ for which the optimal estimator is linear {\em if and only if} 
\begin{equation}F_X(\omega)= F_Z^{\kappa}(\omega). \end{equation}
\end{theorem}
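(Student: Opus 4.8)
The plan is to characterize linearity of the minimum-MSE estimator directly in the characteristic-function domain. Since the MSE-optimal estimator is the conditional mean, the statement ``$h(U)=\frac{\kappa}{\kappa+1}U$ is optimal'' is equivalent to $\mathbb{E}\{X\mid U\}=\frac{\kappa}{\kappa+1}U$, where $U=X+Z$ with $X$ and $Z$ independent. Setting $a=\frac{\kappa}{\kappa+1}$, I would rewrite this as the orthogonality statement that the residual $X-aU$ is orthogonal to every bounded measurable function of $U$. The key reduction is to test this orthogonality only against the complex exponentials $e^{j\omega U}$, $\omega\in\mathbb{R}$ (with $j$ the imaginary unit): because these characters are complete in the relevant $L^2$ sense, the condition $\mathbb{E}\{(X-aU)\,e^{j\omega U}\}=0$ for all $\omega$ is equivalent to full conditional-mean linearity.

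First I would translate each expectation into characteristic functions. By independence $\mathbb{E}\{e^{j\omega U}\}=F_X(\omega)F_Z(\omega)$, and differentiating under the integral sign gives $\mathbb{E}\{U e^{j\omega U}\}=-j\,\frac{d}{d\omega}[F_X F_Z]$ and $\mathbb{E}\{X e^{j\omega U}\}=-j\,F_X'(\omega)F_Z(\omega)$. Substituting these into the orthogonality condition $\mathbb{E}\{X e^{j\omega U}\}=a\,\mathbb{E}\{U e^{j\omega U}\}$ and cancelling the common factor $-j$ yields, after expanding the derivative of the product,
\begin{equation}
(1-a)\,F_X'(\omega)\,F_Z(\omega)=a\,F_X(\omega)\,F_Z'(\omega).
\end{equation}

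Next, since $\frac{a}{1-a}=\kappa$ and (by continuity, as $F_Z(0)=1$) $F_Z(\omega)\neq 0$ near the origin, I would divide through to obtain the first-order linear ODE $\frac{F_X'(\omega)}{F_X(\omega)}=\kappa\,\frac{F_Z'(\omega)}{F_Z(\omega)}$, i.e.\ $\frac{d}{d\omega}\log F_X(\omega)=\kappa\,\frac{d}{d\omega}\log F_Z(\omega)$. Integrating and using the normalization $F_X(0)=F_Z(0)=1$ to fix the constant of integration at zero gives $\log F_X(\omega)=\kappa\log F_Z(\omega)$, hence $F_X(\omega)=F_Z^{\kappa}(\omega)$. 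Because every step in the chain is an equivalence, this establishes both directions of the ``if and only if''. As a consistency check one can expand $\log F$ to second order at $\omega=0$ and verify that $F_X=F_Z^{\kappa}$ forces $\sigma_X^2=\kappa\,\sigma_Z^2$, matching the definition of $\kappa$.

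The main obstacles I anticipate are analytic rather than algebraic: justifying that testing against $\{e^{j\omega U}\}$ is sufficient (completeness of the exponential family, together with the integrability needed for the conditional expectation to be well defined and the residual to lie in $L^2$), legitimizing differentiation under the expectation (secured here by the finite second-moment assumptions $\sigma_X^2,\sigma_Z^2<\infty$), and carefully handling the complex logarithm when integrating, since $\kappa$ need not be an integer and $F_Z$ is complex-valued. This last point is precisely why the power $F_Z^{\kappa}$ and the attendant logarithms must be read with a consistent branch, as flagged in the preliminaries.
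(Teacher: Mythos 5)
The paper does not actually prove this theorem---it is imported verbatim from the cited reference \cite{akyol2012conditions}---so there is no in-paper proof to compare against. Your derivation is correct and is essentially the standard argument from that reference: reduce conditional-mean linearity to orthogonality against the complex exponentials, translate into the characteristic-function identity $(1-a)F_X'F_Z=aF_XF_Z'$, and integrate the resulting logarithmic ODE with $F_X(0)=F_Z(0)=1$ to get $F_X=F_Z^{\kappa}$; your flagged caveats (zeros of $F_Z$ away from the origin and the branch of the complex logarithm) are exactly the analytic points that need care and are consistent with the paper's own conventions.
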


Given  a valid characteristic function  $F_Z(\omega)$,  and for some $\kappa \in \mathbb R^+$, the function $F_Z^{\kappa}(\omega)$ may or may not be a valid characteristic function, which determines the existence of a matching source. For example, matching is guaranteed for integer $\kappa$ and it is also guaranteed for infinitely divisible $Z$. More comprehensive discussion of the conditions on $\kappa$ and  $F_Z(\omega)$ for $F_Z^{\kappa}(\omega)$ to be a valid characteristic function can be found in \cite{akyol2012conditions}.
\begin{figure}
\centering
\includegraphics[scale=0.4]{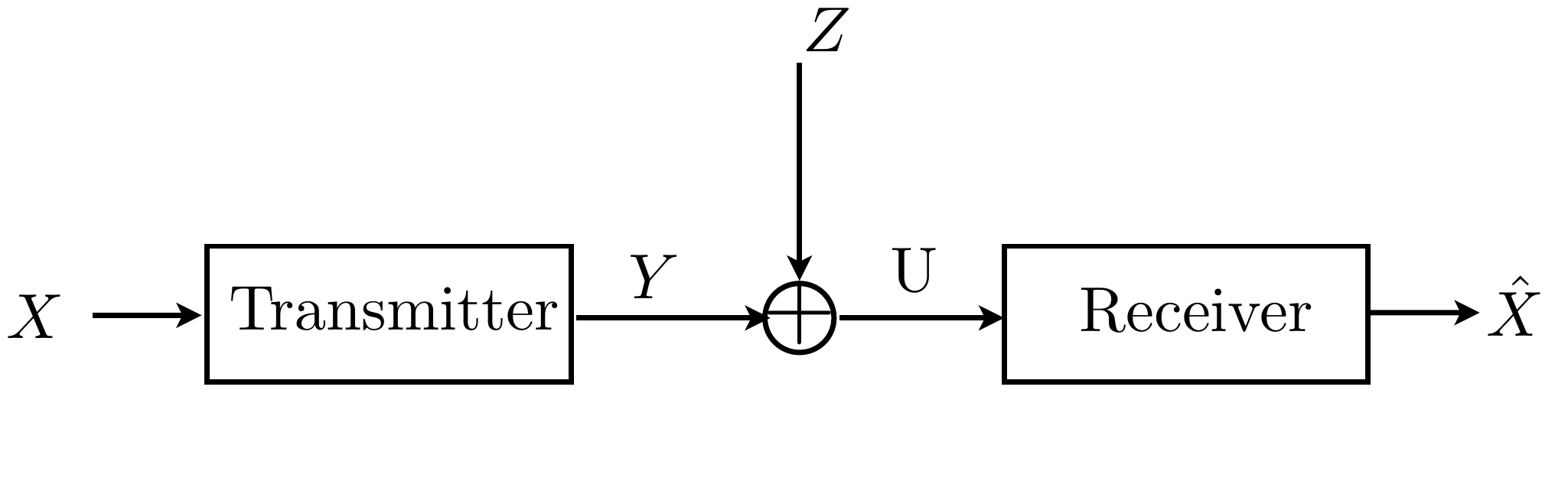}
\caption{The communication setting.}
\label{comm}
\end{figure}


\subsection{Communication Problem}
In \cite{mapping}, a communication scenario whose block diagram is shown in Figure 3 was studied.  In this setting, a scalar source ${X} \in\mathbb R$ is mapped into ${Y}\in  \mathbb R$ by function ${g}\in {\cal {S}}$, and transmitted over an additive noise channel. The channel output ${U}= Y+ Z $ is  mapped by the decoder to the estimate ${\hat X}$ via function ${h} \in {{\cal {S}}}$. The zero mean  noise $Z$ is assumed to be independent of the source $X$. The source density is denoted $f_X(\cdot)$ and the noise density is $f_Z(\cdot)$ with characteristic functions $F_X(\omega)$ and $F_Z(\omega)$, respectively. 

The objective is to minimize, over the choice of encoder ${g}$ and decoder ${h}$, the distortion
\begin{equation}
D={\mathbb E}\{({X}-{{ \hat X}})^2\},
\end{equation}
subject to the average transmission power constraint,
\begin{equation}
\mathbb E \{ {g^2}({X})\}.  \leq {P_T} \label{power_cons}.
\end{equation}

A result pertaining to the simultaneous linearity of optimal mappings is summarized in the next theorem. 

\begin{theorem}[\cite{mapping}]
The optimal mappings are either both linear or they are both nonlinear.
\label{simultenous}
\end{theorem}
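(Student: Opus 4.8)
The plan is to characterize any jointly optimal pair $(g,h)$ by the first-order (stationarity) conditions from calculus of variations, and then to prove that linearity of either mapping forces linearity of the other, so that ``both linear or both nonlinear'' is exactly the statement $g$ linear $\iff h$ linear. First I would fix $g$ and observe that the unconstrained minimization over $h$ is solved by the conditional mean, so the optimal decoder is the MMSE estimator
\begin{equation}
h(u)=\mathbb E\{X\mid U=u\}=\frac{\int x\, f_X(x)\, f_Z(u-g(x))\, dx}{\int f_X(x)\, f_Z(u-g(x))\, dx}.
\end{equation}
Next I would fix $h$ and vary $g$ subject to the power constraint; since $X$ enters the cost only as a parameter the problem decouples pointwise in $x$, and the Lagrangian with multiplier $\lambda$ yields the encoder stationarity condition
\begin{equation}
\lambda\, g(x)=\int \big(x-h(g(x)+z)\big)\, h'(g(x)+z)\, f_Z(z)\, dz .
\end{equation}

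The easy direction is to suppose the optimal decoder is linear, $h(u)=cu$, and substitute $h'(u)=c$ into the encoder condition. Using $\mathbb E\{Z\}=0$ the integral collapses to $c\,(x-c\,g(x))$, whence $(\lambda+c^2)\,g(x)=c\,x$ and $g$ is linear. This alone rules out the configuration (nonlinear encoder, linear decoder). The harder, and I expect decisive, direction is the converse: assuming the optimal encoder is linear, $g(x)=ax$, show the decoder is linear. This does \emph{not} follow from the MMSE formula by itself, since for a linear encoder the conditional mean $\mathbb E\{X\mid aX+Z\}$ is linear only under the matching condition of Theorem~1, namely $F_{aX}(\omega)=F_Z^{\kappa}(\omega)$; the argument must therefore exploit joint optimality. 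I would proceed by contradiction: if $g(x)=ax$ is the best response to a nonlinear $h$, then stationarity forces the right-hand side $R(x):=\int \big(x-h(ax+z)\big)h'(ax+z)\,f_Z(z)\,dz$ to equal the linear function $\lambda a x$. Decomposing
\begin{equation}
R(x)=x\,\mathbb E_Z\{h'(ax+Z)\}-\tfrac12\,\mathbb E_Z\{(h^2)'(ax+Z)\},
\end{equation}
the requirement that $R$ be affine in $x$, combined with $h$ being the conditional mean, should force the smoothed derivative $\mathbb E_Z\{h'(ax+Z)\}$ to be constant in $x$, and hence $h'$ to be constant, contradicting the assumed nonlinearity of $h$.

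The main obstacle is precisely this last implication: converting ``$R$ affine'' into ``$h$ affine'' rigorously, rather than merely constraining $h$ on the support of $U$. The clean step $\mathbb E_Z\{h'(y+Z)\}\equiv\text{const}\Rightarrow h'\equiv\text{const}$ holds by a Fourier argument when $F_Z(\omega)$ is non-vanishing, but can fail where $F_Z$ has zeros, so this is exactly the point at which I expect to have to invoke the characteristic-function machinery underlying Theorem~1. A complementary and possibly cleaner route, worth developing in parallel, is to show directly that ``linear encoder is optimal'' and ``linear decoder is optimal'' both reduce, via Theorem~1, to the single matching condition $F_X(\omega)=F_Z^{\kappa}(\omega)$, so that the two linearity properties necessarily hold or fail together; combining either route with the easy direction above establishes the equivalence and hence the theorem.
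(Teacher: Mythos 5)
First, note that the paper does not actually prove Theorem~\ref{simultenous}: it imports the result from \cite{mapping}, so there is no in-paper proof to compare against, and your attempt must be judged on its own terms. On those terms it is half a proof. The direction ``optimal decoder linear $\Rightarrow$ optimal encoder linear'' is correct and complete: substituting $h(u)=cu$, $h'(u)=c$ into the encoder stationarity condition and using $\mathbb E\{Z\}=0$ collapses it to $(\lambda+c^2)g(x)=cx$, which rules out the configuration (nonlinear encoder, linear decoder). The converse direction, which is where essentially all the content of the theorem lies, is not closed --- as you yourself acknowledge.

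Concretely, two things go wrong there. (i) From $R(x)=x\,\mathbb E_Z\{h'(ax+Z)\}-\tfrac12\,\mathbb E_Z\{(h^2)'(ax+Z)\}$ being affine in $x$ you cannot conclude that $\mathbb E_Z\{h'(ax+Z)\}$ is constant: both summands depend on $x$, so there is no well-defined ``coefficient of $x$''; one can make $x\,u(x)-v(x)$ affine with $u$ non-constant simply by taking $v(x)=x\,u(x)-(\text{affine})$. Turning ``$R$ affine'' into ``$h$ affine'' requires additional structural input from the fact that $h$ is the conditional mean of $X$ given $aX+Z$, and that input is exactly what is missing. (ii) The fallback route is circular in the problematic direction: Theorem~1 characterizes when the MMSE estimator for a \emph{given} linear observation model is linear; it says nothing about when a linear encoder is \emph{globally optimal} over all encoders satisfying the power constraint. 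The assertion ``linear encoder is optimal if and only if the matching condition holds'' is the hard direction restated, not a reduction of it. Consequently the configuration (linear optimal encoder, nonlinear optimal decoder) is not excluded by your argument, and since the theorem is precisely the exclusion of both mixed configurations, the proposal as written establishes only half of the statement.
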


The necessary and sufficient condition for linearity for both mappings is given by the following theorem.

\begin{theorem}[\cite{mapping}]
For a given power constraint $P_T$, noise $Z$ with variance $\sigma_Z^2$ and characteristic function $F_Z(\omega)$, source $X$ with variance $\sigma_X^2$ and characteristic function $F_X(\omega)$, the optimal encoder and decoder mappings are linear {\it if and only if}
\begin{equation}
\label{matching}
F_X  (\alpha \omega)=F_Z^\kappa (\omega)
\end{equation}
where $\kappa =\frac{P_T}{\sigma_Z^2}$ and $\alpha= \sqrt{ \frac{P_T}{\sigma_X^2}}$.
\end{theorem}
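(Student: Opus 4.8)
\emph{Proof approach.} The plan is to reduce the joint encoder--decoder optimization to the pure estimation problem of Theorem~1 through the scaling induced by the optimal linear encoder, and then to close the argument using the simultaneous-linearity property of Theorem~\ref{simultenous}.

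First I would pin down the form of a linear encoder. If the optimal encoder is linear, write $g(x)=cx$; since the end-to-end distortion is monotonically decreasing in the transmit power (amplifying the signal only raises the effective channel SNR), the power constraint must be active, forcing $c=\alpha=\sqrt{P_T/\sigma_X^2}$. Introducing the scaled source $\tilde X=\alpha X$, which has variance $P_T$, the channel output becomes $U=\tilde X+Z$, and for the squared-error cost the optimal decoder is always the conditional mean $h(U)=\mathbb E\{X\mid U\}$. This estimator is linear in $U$ precisely when $\mathbb E\{\tilde X\mid U\}$ is linear, so decoder linearity is equivalent to linearity of the MMSE estimator of $\tilde X$ from $\tilde X+Z$ at SNR $\kappa=P_T/\sigma_Z^2$.

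For the necessity direction, I would apply Theorem~1 to this estimation problem: linearity of the optimal estimator of $\tilde X$ forces $F_{\tilde X}(\omega)=F_Z^{\kappa}(\omega)$. Since $F_{\tilde X}(\omega)=\mathbb E\{e^{j\omega\alpha X}\}=F_X(\alpha\omega)$, this is exactly the matching condition $F_X(\alpha\omega)=F_Z^{\kappa}(\omega)$. For sufficiency, I would check person-by-person optimality of the linear pair. Under the matching condition, Theorem~1 guarantees that the optimal decoder for $g(x)=\alpha x$ is linear, say $h(u)=\beta u$. Conversely, for a fixed linear decoder $h(u)=\beta u$ the encoder must minimize $\mathbb E\{(X-\beta g(X)-\beta Z)^2\}$ subject to $\mathbb E\{g^2(X)\}\le P_T$; because $Z$ is zero mean and independent of $X$, its cross term vanishes and the Lagrangian reduces to the pointwise problem of minimizing $(x-\beta g(x))^2+\lambda g^2(x)$, whose stationary solution $g(x)=\beta x/(\beta^2+\lambda)$ is linear. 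Hence the linear encoder--decoder pair is a fixed point of the best-response map.

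The main obstacle is upgrading this stationarity to genuine global optimality, since the problem is not jointly convex and a fixed point of alternating best responses need not be the global minimizer. I would resolve this by invoking Theorem~\ref{simultenous}, which forbids a mixed optimum in which exactly one mapping is linear: the optimal mappings are either both linear or both nonlinear. Combined with the necessity direction---linear optimal mappings imply the matching condition---this pins down the equivalence, showing that the optimal encoder and decoder are linear if and only if $F_X(\alpha\omega)=F_Z^{\kappa}(\omega)$.
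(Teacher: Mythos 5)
This theorem is imported from \cite{mapping}; the present paper states it without proof, so there is no internal argument to compare against and your proposal must be judged on its own terms. The necessity direction is sound and is the natural route: an optimal linear encoder must use full power (among linear encoders paired with their best linear decoders the distortion $\sigma_X^2\sigma_Z^2/(c^2\sigma_X^2+\sigma_Z^2)$ is strictly decreasing in $c^2$, so $c=\alpha$), and then applying Theorem 1 to the scaled source $\tilde X=\alpha X$ at SNR $\kappa=P_T/\sigma_Z^2$, together with $F_{\tilde X}(\omega)=F_X(\alpha\omega)$, yields exactly (\ref{matching}). One small caution: your justification for full power, that the end-to-end MMSE is monotone in the transmit gain, is not obvious for general (non-divisible) noise; the comparison restricted to linear pairs sketched above is the safe version and is all you need.

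The sufficiency direction has a genuine gap, one you locate yourself but do not close. What you actually establish under (\ref{matching}) is that the linear pair is a fixed point of the best-response map: the MMSE decoder for the linear encoder is linear (Theorem 1), and the optimal encoder against a fixed linear decoder is linear (your pointwise Lagrangian computation). That is person-by-person optimality, and Theorem \ref{simultenous} cannot upgrade it to global optimality. Theorem \ref{simultenous} asserts a property of the globally optimal pair (both linear or both nonlinear); it is entirely consistent with that assertion that, even when (\ref{matching}) holds, the global optimum is a both-nonlinear pair achieving strictly smaller distortion than your linear fixed point. Combining Theorem \ref{simultenous} with necessity, as you propose, delivers only the contrapositive of necessity (no matching implies not both linear), not sufficiency (matching implies both linear). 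Closing this requires an argument of a different character --- e.g., the variational necessary conditions of optimality developed in \cite{mapping} together with a reason why no nonlinear pair can outperform the linear one in the matched case --- which your proposal does not supply. Since the sufficiency direction is precisely the one this paper leans on (the jammer forcing linearity via Lemma \ref{mainlemma}), the gap is not cosmetic.
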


\subsection{Gaussian Jamming Problem}

The problem of transmitting independent and identically distributed Gaussian random variables over a Gaussian channel in the presence of an additive jammer was considered in \cite{basar1983gaussian,basar1985complete}. In \cite{basar1985complete} a game theoretic approach was developed and it was shown that the problem admits a mixed saddle point solution where the optimal transmitter and receiver employ a ``randomized" strategy. The randomization information can be sent over a side channel between transmitter and receiver or it could be viewed as the information generated by the third party  and observed by both transmitter and receiver \footnote{In practice, randomization can be achieved by (pseudo)random number generators at the transmitter and receiver using the same seed.}. Surprisingly, the optimal jamming strategy ignores the input to the jammer and merely generates  ``Gaussian" noise, independent of the source.  

\begin{theorem}[\cite{basar1983gaussian,basar1985complete}]
The optimal encoding function for the transmitter is randomized linear mapping: 
\begin{equation}
 Y(i)=\gamma(i) \alpha_T X(i), 
 \label{rand}
 \end{equation}
where $\gamma(i)$ is i.i.d. Bernoulli ($\frac{1}{2}$) over the alphabet $\{-1,1\}$
\begin{equation}\gamma(i)\sim Bern (\frac{1}{2}) \end{equation}
and $\alpha_T=\sqrt{\frac{P_T}{\sigma_X^2}} $.
The optimal jammer generates i.i.d. Gaussian output $Z(i)$ 
\begin{equation}Z(i)\sim \mathcal N (0, P_A). \end{equation}
 where $Z(i)$ is independent of  the source $X(i)$. The optimal receiver is 
 \begin{equation}
h({U}(i))=\frac{\sigma_X^2}{P_T+P_A+\sigma_N^2} {U}(i),
 \end{equation}
 and total cost is
  \begin{equation}
  J=\frac{\sigma_X^2(P_A+\sigma_N^2)}{P_T+P_A+\sigma_N^2}.
 \end{equation}
\label{th2}
\end{theorem}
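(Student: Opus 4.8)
The plan is to verify directly that the proposed triple $(g_T^*,g_A^*,h^*)$ satisfies the two saddle-point inequalities in \eqref{saddle}, handling each direction separately.

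For the right inequality $J(g_T^*,g_A^*,h^*)\le J(g_T,g_A^*,h)$, I would freeze the jammer at its claimed optimum, so that it emits $Z\sim\mathcal N(0,P_A)$ independent of the source. Then $Z+N$ is zero-mean Gaussian with variance $P_A+\sigma_N^2$ and independent of $X$, and the transmitter--receiver subproblem collapses to zero-delay transmission of the Gaussian source $X$ over an additive Gaussian channel with noise power $P_A+\sigma_N^2$ under transmit power $P_T$. I would then apply the matching condition \eqref{matching}: inserting the Gaussian characteristic functions $F_X(\omega)=e^{-\sigma_X^2\omega^2/2}$ and $F_Z(\omega)=e^{-(P_A+\sigma_N^2)\omega^2/2}$ with $\kappa=P_T/(P_A+\sigma_N^2)$ and $\alpha=\sqrt{P_T/\sigma_X^2}$, both sides collapse to $e^{-P_T\omega^2/2}$, so the matching condition holds and the optimal encoder and decoder are linear. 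Consequently no pair $(g_T,h)$ can undercut the linear pair (equivalently the sign-randomized linear pair, since symmetric Gaussian noise renders the randomization cost-free), whose distortion is exactly the claimed $J^*$.

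For the left inequality $J(g_T^*,g_A,h^*)\le J(g_T^*,g_A^*,h^*)$, I would freeze the transmitter at the randomized linear map \eqref{rand} and let the receiver, which shares the sign sequence $\gamma$ with the transmitter, derandomize by forming $\gamma U=\alpha_T X+\gamma Z+\gamma N$ and then apply a linear gain $\beta$. For an arbitrary admissible jammer $Z=g_A(X)$ with $\mathbb E\{Z^2\}\le P_A$, I would expand the mean-squared error. The crucial structural fact is that the jammer observes only $X$, not $\gamma$, so $Z$ is independent of $\gamma$; hence every cross term coupling the jammer (or the channel noise) to the source carries a factor $\mathbb E\{\gamma\}=0$ and drops out, e.g. $\mathbb E\{\gamma X Z\}=\mathbb E\{\gamma\}\,\mathbb E\{XZ\}=0$. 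What survives reduces to
\[
J(g_T^*,g_A,h^*)=(1-\beta\alpha_T)^2\sigma_X^2+\beta^2\,\mathbb E\{Z^2\}+\beta^2\sigma_N^2,
\]
an increasing function of $\mathbb E\{Z^2\}$ alone. Thus the jammer's best response is to spend its full budget $\mathbb E\{Z^2\}=P_A$, with source correlation irrelevant, which the independent Gaussian jammer achieves, giving equality at $J^*$ and establishing the inequality.

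Combining the two directions shows $(g_T^*,g_A^*,h^*)$ is a saddle point. I expect the left inequality to be the \emph{main obstacle}: the delicate point is arguing that the shared randomness hidden from the jammer annihilates any source--jammer correlation, so that the jammer's entire leverage collapses to its raw power. This is precisely why the saddle point must be a mixed (randomized) strategy; without the randomization a jammer correlating $Z$ with $X$ could strictly increase the distortion against a fixed linear receiver, and the independent Gaussian jammer would cease to be a best response. The right inequality's substantive content is external, namely the optimality of linear mappings for a matched Gaussian source--channel pair, which I import through the matching condition \eqref{matching}.
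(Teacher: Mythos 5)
Your verification is correct, and it is essentially the route the paper itself takes --- not for Theorem~\ref{th2} directly (which the paper imports by citation), but in its proof of the generalization, Theorem~\ref{th3}, of which your argument is the Gaussian specialization. Both you and the paper check the two saddle-point inequalities separately: for the right inequality you freeze the jammer and invoke the matching condition \eqref{matching} (your computation that both sides collapse to $e^{-P_T\omega^2/2}$ is exactly why the Gaussian pair matches for every $P_A$, $P_T$); for the left inequality you condition on the shared sign $\gamma$ and observe that every source--jammer and noise--jammer cross term carries a factor $\mathbb{E}\{\gamma\}=0$, which is precisely the paper's $D_1$/$D_2$ cancellation argument and its justification for the Bernoulli$(1/2)$ choice. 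The one genuine difference is from the \emph{original} proof in the cited references, which (as the paper notes in its remark following Theorem~\ref{th2}) rests on the information-theoretic fact that zero-delay linear mappings attain the asymptotic OPTA for the matched Gaussian source--channel pair; your replacement of that argument by the matching condition is exactly the substitution that lets the paper extend the result beyond the Gaussian case. Two minor points: the randomization is cost-free because the decoder shares $\gamma$, not because the Gaussian noise is symmetric; and the vanishing of $\mathbb{E}\{\gamma^2 ZN\}=\mathbb{E}\{ZN\}$ needs the independence of $N$ from $X$ (hence from $Z=g_A(X)$) together with $\mathbb{E}\{N\}=0$, which you implicitly use but should state.
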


\begin{remark}
In this paper, we study the generalized jamming problem which does not limit the set of sources and channels to Gaussian random variables. Surprisingly, as we show in Section V, the  linearity property of the optimal transmitter and receiver at the saddle point solution still holds, while the Gaussianity of the jammer output in the early special case was merely a means to satisfy this linearity condition, and does not hold in general. 
\label{rem}
\end{remark}

\begin{remark}
The proof of Theorem \ref{th2} relies on the fact that for a Gaussian source over a Gaussian channel, zero-delay linear mappings achieve the performance of the asymptotically high delay optimal source-channel communication system \cite{cover2006elements}. This fact is unique to the Gaussian source-channel pair, hence it might be tempting to conclude that the saddle point solution in Theorem \ref{th2} can only be obtained in the ``all Gaussian" setting. Perhaps surprisingly, in this paper, we show that there are infinitely many source-noise pairs that yield a saddle point solution similar to Theorem $\ref{th2}$ (see Theorem \ref{th} and Remark \ref{rem}).
\end{remark}

\section{A Simple Upper Bound Based on Linearity}

In this section, we present a new lemma that is used to upper bound the distortion of any zero-delay communication system by that of the fixed, best linear encoder and decoder. Although the main idea is quite simple, it is nevertheless  presented in a separate lemma, due to its operational significance here.

\begin{lemma}
Consider the problem setting in Figure \ref{jammingfig}. For any given jammer $f_Z(z)$, the distortion achievable by the transmitter-receiver, $D$,   is upper bounded by the distortion achieved by linear encoder and decoder  $D_L=\frac{\sigma_X^2(P_A+\sigma_N^2)}{P_T+P_A+\sigma_N^2}$ which is determined by second moments, regardless of the normalized densities. Hence, the linear mappings will maximize the distortion.  
\label{mainlemma}
\end{lemma}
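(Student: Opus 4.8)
The plan is to exhibit an explicit transmitter--receiver pair whose distortion equals $D_L$ for \emph{every} admissible jammer, and then invoke minimality to conclude $D \le D_L$. Concretely, I would let the transmitter use the randomized linear map $Y = \gamma \alpha_T X$ with $\alpha_T = \sqrt{P_T/\sigma_X^2}$ and $\gamma \sim \mathrm{Bern}(\tfrac12)$ over $\{-1,1\}$, where the realization of $\gamma$ is shared with the receiver through common randomness but is \emph{not} available to the jammer. This choice meets the transmitter power constraint with equality, since $\mathbb E\{Y^2\} = \mathbb E\{\gamma^2\}\alpha_T^2 \sigma_X^2 = P_T$.

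The first step is to form the receiver's de-randomized observation $\tilde U = \gamma U = \gamma(Y + Z + N) = \alpha_T X + \gamma(Z+N)$, using $\gamma^2 = 1$. Writing the effective noise as $W = \gamma(Z+N)$, the crux of the argument is that $W$ is uncorrelated with the source: because the jammer does not observe $\gamma$, its output $Z = g_A(X)$ is a function of $X$ alone, so $\gamma$ is independent of the pair $(X, Z)$ and also of $N$; hence $\mathbb E\{X W\} = \mathbb E\{\gamma\}\, \mathbb E\{X(Z+N)\} = 0$. The second step bounds the effective noise power: since $\gamma^2 = 1$ and $N$ is independent of $X$ (hence of $Z$), one gets $\mathbb E\{W^2\} = \mathbb E\{(Z+N)^2\} = \mathbb E\{Z^2\} + \sigma_N^2 \le P_A + \sigma_N^2$.

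With these two facts, $\tilde U = \alpha_T X + W$ is precisely the observation model of a linear channel whose additive noise is uncorrelated with $X$ and has power at most $P_A + \sigma_N^2$. Applying the best linear estimator $\hat X = \tfrac{\alpha_T \sigma_X^2}{P_T + \mathbb E\{W^2\}}\,\tilde U$ yields distortion $\tfrac{\sigma_X^2 \,\mathbb E\{W^2\}}{P_T + \mathbb E\{W^2\}}$, which is monotone increasing in $\mathbb E\{W^2\}$ and therefore at most $D_L = \tfrac{\sigma_X^2(P_A+\sigma_N^2)}{P_T + P_A + \sigma_N^2}$, with equality when the jammer spends its full power. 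Since this linear pair is merely one admissible choice for the minimizing transmitter and receiver, the optimal (minimal) distortion satisfies $D \le D_L$; equivalently, no jammer can force distortion exceeding $D_L$, so linear mappings attain this worst case for the transmitter--receiver.

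The main obstacle, and the only genuinely delicate point, is justifying the decorrelation identity $\mathbb E\{X W\}=0$: it hinges entirely on the shared randomness $\gamma$ being hidden from the jammer, so that $\gamma$ remains independent of $g_A(X)$ no matter how cleverly the jammer correlates its output with the source. I would therefore state this independence explicitly and confirm that the randomization neither violates the transmitter power budget (true because $\gamma^2=1$) nor requires anything at the receiver beyond the shared sign. Everything else reduces to the routine second-moment computation above.
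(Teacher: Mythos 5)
Your proof is correct, and it is substantially more careful than the paper's own argument, which is a two-line assertion: ``the encoder and decoder can utilize the linear mappings that satisfy the power constraint $P_T$ for any source and channel density, hence it is straightforward to achieve $D = D_L$.'' Taken literally with a \emph{deterministic} linear encoder $Y=\alpha_T X$, that assertion is false: a jammer with $g_A(X) = -c\,\alpha_T X$ correlates $Z$ with $Y$, partially (or, if $P_A \ge P_T$, entirely) cancels the signal, and drives the distortion above $D_L$. The paper only patches this hole later, inside the proof of Theorem \ref{th3}, where the $\gamma \sim Bern(\tfrac12)$ randomization is invoked to kill the cross terms $\mathbb E\{ZX\}$ and $\mathbb E\{ZN\}$. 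You have, in effect, pulled that randomization argument forward into the lemma where it is actually needed: the identity $\mathbb E\{XW\} = \mathbb E\{\gamma\}\,\mathbb E\{X(Z+N)\} = 0$, which holds because $\gamma$ is hidden from the jammer and hence independent of $(X, Z, N)$, is exactly the missing step, and your subsequent second-moment computation $\tfrac{\sigma_X^2\,\mathbb E\{W^2\}}{P_T+\mathbb E\{W^2\}} \le D_L$ is routine and correct (note $\mathbb E\{ZN\}=0$ follows from $N$ being zero mean and independent of $X$, hence of $Z=g_A(X)$). What your approach buys is a self-contained, airtight upper bound that holds against \emph{every} admissible jammer, including source-correlated ones; what the paper's terser route buys is brevity, at the cost of deferring the essential decorrelation mechanism to the saddle-point verification. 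You correctly identify the one delicate hypothesis — that the shared sign $\gamma$ is unavailable to the jammer — which is also the assumption underlying the randomized strategies of Theorems \ref{th2} and \ref{th3}.
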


\begin{proof}
Clearly, the the encoder and decoder can utilize the linear mappings that satisfy the power constraint $ P_T$ for any source and channel density. Hence, it is straightforward to achieve $D=D_L$ in any source-channel density by using linear mappings. 
\end{proof}

\begin{remark}
Lemma \ref{mainlemma} is the main result that connects the recent results on ``linearity" of optimal estimation and communication mappings to the jamming problem. Lemma \ref{mainlemma} implies that the optimal strategy for a jammer which can only control the density of the additive noise channel, is to force the transmitter and receiver to use linear mappings.
\end{remark}

\section{Main Result}
Our main results concerns the optimal strategy for the transmitter, the adversary and the receiver in Figure 1 for the transmission index $i$. Let us introduce a quantitiy $\beta$ as 
\begin{equation}
\beta \triangleq \frac{P_A+\sigma_N^2} {P_T}.
\end{equation}
{\bf Assumption}: Throughout this section, we assume that $F_X^{\beta}( \omega)$ is a valid characteristic function for a given $\beta \in \mathbb R^+$. The case where this assumption does not hold is analyzed in Section VII.

Next, we present our main result which pertains to optimal jamming. 

\begin{theorem}
For the jamming problem, the optimal encoding function for the transmitter is randomized  linear mapping: 
\begin{equation}
 Y(i)=\gamma(i) \alpha_T X(i), 
 \label{rand3}
 \end{equation}
where $\gamma(i)$ is i.i.d. Bernoulli ($\frac{1}{2}$) over the alphabet $\{-1,1\}$
\begin{equation}\gamma(i)\sim Bern (\frac{1}{2}) \end{equation}
and $\alpha_T=\sqrt{\frac{P_T}{\sigma_X^2}} $.
The optimal jamming function is to generate i.i.d.  output $Z(i)$ with characteristic function
 \begin{equation} F_Z(\omega)=\frac{F_X^{\beta}(\alpha_T \omega)}{ F_N(\omega)}\label{main} \end{equation}
 where $Z(i)$ is independent of  the adversarial  input $X(i)$.

 The optimal receiver is 
 \begin{equation}
h({U}(i))=\frac{\sigma_X^2}{P_T+P_A+\sigma_N^2} {U}(i),
\label{dec}
 \end{equation}
 and total cost is
  \begin{equation}
  J=\frac{\sigma_X^2(P_A+\sigma_N^2)}{P_T+P_A+\sigma_N^2}.
 \end{equation}
 
Moreover, this saddle point solution is (almost surely) unique. 
\label{th3}
\end{theorem}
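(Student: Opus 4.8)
The plan is to verify the two saddle-point inequalities in \eqref{saddle} directly, identify their common value with $D_L$, and then pin down uniqueness using the ``only if'' halves of the linearity theorems. Throughout I would work with the \emph{effective} post-transmitter noise $W\triangleq Z+N$; since the jammer output $Z$ is independent of both the source and the channel noise $N$, its characteristic function factors as $F_W(\omega)=F_Z(\omega)F_N(\omega)$. Substituting the proposed \eqref{main} collapses this to $F_W(\omega)=F_X^{\beta}(\alpha_T\omega)$, which is a legitimate characteristic function because $F_X^{\beta}$ is assumed valid and scaling its argument preserves validity. Reading off the second cumulant gives $\mathrm{Var}(W)=\beta\alpha_T^2\sigma_X^2=\beta P_T=P_A+\sigma_N^2$, so $\sigma_Z^2=P_A$ and the jammer meets its power budget with equality; against $g_A^*$ the transmitter therefore faces a memoryless additive channel with source $X$, power $P_T$, and noise $W$.

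For the right inequality $J(g_T^*,g_A^*,h^*)\le J(g_T,g_A^*,h)$, I would instantiate the communication linearity condition \eqref{matching} on this effective channel. With $\kappa=P_T/\sigma_W^2=1/\beta$ and $\alpha=\sqrt{P_T/\sigma_X^2}=\alpha_T$, condition \eqref{matching} reads $F_X(\alpha_T\omega)=F_W^{1/\beta}(\omega)=\big(F_X^{\beta}(\alpha_T\omega)\big)^{1/\beta}$, an identity by construction. Hence the optimal deterministic encoder/decoder against $W$ are linear and attain $D_L$, and by Theorem \ref{simultenous} no nonlinear pair can improve on this. The randomized pair $(g_T^*,h^*)$, with the receiver exploiting the shared $\gamma(i)$, realizes exactly this linear input-output behavior and so also attains $D_L$, confirming it is a best response to $g_A^*$.

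For the left inequality $J(g_T^*,g_A,h^*)\le J(g_T^*,g_A^*,h^*)$ I would fix the randomized-linear pair and let the adversary choose an arbitrary $g_A$. Writing the end-to-end error as $(1-c\alpha_T)X-c\,\gamma\big(g_A(X)+N\big)$ and averaging over the sign $\gamma\in\{-1,1\}$, which is independent of $(X,N)$ and unobserved by the jammer, every term linear in $\gamma$ vanishes, leaving $(1-c\alpha_T)^2\sigma_X^2+c^2\big(\mathbb E\{g_A^2(X)\}+\sigma_N^2\big)$. This is monotone in the jammer power $\mathbb E\{g_A^2(X)\}\le P_A$, so the adversary can do no better than exhaust its budget, and substituting the MMSE gain $c$ returns exactly $D_L$. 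This is precisely why randomization is indispensable: it denies the source-observing jammer any usable correlation with the transmitted signal and collapses its influence to a pure power term. The two inequalities together show $(g_T^*,g_A^*,h^*)$ is a saddle point with value $J=D_L$.

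For uniqueness, I would argue as follows. By Lemma \ref{mainlemma}, $V(g_A)\triangleq\min_{g_T,h}J(g_T,g_A,h)\le D_L$ for every jammer, so the game value is fixed at $D_L$ by the matching construction. For $(g_T^*,h^*)$ to survive as a best response against a candidate jammer --- equivalently, for linear encoding and decoding to be optimal against $W=Z+N$ --- the ``only if'' direction of \eqref{matching} forces $F_W(\omega)=F_X^{\beta}(\alpha_T\omega)$, hence \eqref{main}, determining $F_Z$ uniquely; any non-matching jammer admits a strictly superior nonlinear transmitter-receiver, so $V(g_A)<D_L$ and it cannot support the saddle. The same ``only if'' forces the transmitter to be linear, with the sign randomization the canonical feature neutralizing the adversary, so two optimal policies can differ only on a null set --- the content of the ``almost surely'' qualifier. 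The step I expect to be most delicate is exactly this converse: upgrading the ``only if'' of the estimation/communication linearity results to a \emph{strict} gap $V(g_A)<D_L$ away from matching, since that strictness --- rather than the elementary bookkeeping of the two saddle inequalities --- is what excludes alternative equilibria.
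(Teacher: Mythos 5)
Your proposal is correct and follows essentially the same route as the paper's proof: verify the right-hand saddle inequality by reducing to the matching condition \eqref{matching} on the effective noise $W=Z+N$ and invoking Theorem~\ref{simultenous}, verify the left-hand inequality by averaging out the $\gamma$-linear cross terms so that only the jammer's second moment survives, and obtain uniqueness from the ``only if'' direction of the linearity theorems together with Lemma~\ref{mainlemma}. Your explicit second-cumulant check that $\mathrm{Var}(W)=\beta P_T$ forces $\sigma_Z^2=P_A$ is a small but worthwhile addition the paper leaves implicit.
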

\begin{proof}
We prove this result by verifying that the mappings in this theorem satisfy the saddle point inequalities given in (\ref{saddle}), following the approach in \cite{basar1986solutions}.

RHS of (\ref{saddle}): Suppose the policy of the jammer is given as in Theorem \ref{th3}. Then, the communication system at hand becomes identical to the communication problem considered in Section II.B, for which the linear encoder, i.e., $ Y(i)= \alpha_T X(i)$ is optimal (see Theorem 3). Any probabilistic encoder, given in the form of (\ref{rand3}) (irrespective of the density of $\gamma$) yields the same cost with deterministic encoders and hence is optimal. 

LHS of (\ref{saddle}): Let us derive the overall cost conditioned on the realization of the transmitter mappings (i.e., $\gamma=1$ and $\gamma=-1$) used in conjunction with optimal linear decoder, given in (\ref{dec}). If $\gamma=1$
\begin{equation}
D_1=J +\xi \mathbb E\{ZX\}+\psi \mathbb E\{ZN\}
\label{c1}
\end{equation}
  for some constants $\xi, \psi $, and similarly if  $\gamma=-1$
 \begin{equation}
D_2=J -\xi \mathbb E\{ZX\}-\psi  \mathbb E\{ZN\}
\label{c2}
\end{equation}
where the overall cost is
\begin{equation}
D(i)=\mathbb P(\gamma(i)=1)D_1+\mathbb P(\gamma(i)=-1)D_2 .
\end{equation} 
Clearly,  for $\gamma(i)\sim Bern (\frac{1}{2})$ overall cost $J$  is only a function of the second order statistics of the adversarial outputs, irrespective of the distribution of $Z$; hence the solution presented here is a saddle point.

Towards showing (almost sure) uniqueness, we start by restating the fact that the optimal solution for transmitter is in the randomized form given in (\ref{rand3}). Let us prove the properties which were not covered by the proof of the saddle point. 

Characteristic function of $Z$: The choice $F_Z(\omega)=\frac{F_X^{\beta}(\alpha_T \omega)}{ F_N(\omega)}$ renders the transmitter and receiver mapping linear, due to Theorem 3 and maximizes the overall cost due to Lemma 1. 

Independence of $Z$ of $X$ and $N$: If the jammer introduces some correlation, i.e., if $\mathbb E\{ZX\}\neq0$, the transmitter can adjust its Bernoulli parameter to decrease the distortion. Hence, the optimal adversarial strategy is setting $\mathbb E\{ZX\}=0$ which is guaranteed by the independence of zero mean random variables $Z$ and $X$. 

Choice of Bernoulli parameter: Note that the optimal choice of the Bernoulli parameter for the transmitters is $\frac{1}{2}$ since other choices will not cancel out  the cross terms  in (\ref{c1}) and (\ref{c2}). These cross terms can be exploited by the adversary to increase the cost, and hence an optimal strategy for transmitter is to set $\gamma=Bern(1/2)$.

\end{proof}

\begin{remark}
Note that Theorem \ref{th3} recovers the previous results that focus on the Gaussian source \cite{basar1983gaussian,basar1985complete}. When $X \sim {\mathcal N} (0, \sigma_X^2)$, then the unique matching noise, determined by (\ref{main}) is also Gaussian $Z \sim {\mathcal N} (0, P_A)$ for all power levels $P_A$ and $P_T$.  Hence, Theorem 5 strictly subsumes Theorem 4.  
\end{remark}

%
%

\section{Implications of the Main Result}
In this section, we explore some special cases obtained by varying $\beta$ and utilizing the matching condition (\ref{main}). We start with a simple but perhaps surprising result.
\begin{theorem}
In the case of identically distributed source and channel, i.e., $X \sim N$ and $P_T=P_A=\sigma_N^2$,  then optimal jamming strategy would be  generating a random variable identically distributed with $X$ (and $N$), and optimal transmitter functions are as given in Theorem \ref{th3}. \label{th}
\end{theorem}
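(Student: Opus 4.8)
The plan is to recognize Theorem~\ref{th} as a direct specialization of the main result, Theorem~\ref{th3}, under the two hypotheses $X \sim N$ (so that $F_X(\omega)=F_N(\omega)$ and $\sigma_X^2=\sigma_N^2$) and $P_T=P_A=\sigma_N^2$. The strategy is therefore to substitute these equalities into the two scalars $\beta$ and $\alpha_T$ that parametrize the saddle-point solution, feed them into the matching condition (\ref{main}), simplify, and finally confirm that the standing assumption of Section~V under which Theorem~\ref{th3} is stated is automatically satisfied.

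First I would evaluate $\beta$ and $\alpha_T$. Since $P_T=P_A=\sigma_N^2$, the quantity $\beta=\frac{P_A+\sigma_N^2}{P_T}$ collapses to $\beta=2$, and since additionally $\sigma_X^2=\sigma_N^2$, the gain $\alpha_T=\sqrt{\frac{P_T}{\sigma_X^2}}$ reduces to $\alpha_T=1$. Substituting into (\ref{main}) gives
\begin{equation}
F_Z(\omega)=\frac{F_X^{2}(\omega)}{F_N(\omega)}.
\end{equation}
Using $F_X(\omega)=F_N(\omega)$, the right-hand side simplifies to $F_Z(\omega)=F_X(\omega)=F_N(\omega)$, so the optimal jamming output is indeed identically distributed with both the source and the channel noise. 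The randomized linear form of the optimal transmitter then carries over verbatim from Theorem~\ref{th3} (with $\alpha_T=1$), as do the optimal decoder and the total cost.

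The one point that requires care — and the only genuine obstacle — is verifying the standing assumption of Section~V that $F_X^{\beta}(\omega)$ is a valid characteristic function, since Theorem~\ref{th3} is asserted only under this premise. Here the check is immediate: with $\beta=2$ an integer, $F_X^{2}(\omega)$ is the characteristic function of the sum of two independent copies of $X$ and is therefore always valid; consequently the resulting $F_Z(\omega)=F_X(\omega)$ is a valid characteristic function as well. With the assumption confirmed, the conclusion of Theorem~\ref{th} — together with its optimality and uniqueness — follows directly from Theorem~\ref{th3}.
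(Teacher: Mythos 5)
Your proposal is correct and follows essentially the same route as the paper: substitute $\beta=2$ and $\alpha_T=1$ into the matching condition (\ref{main}) to get $F_Z(\omega)=F_X^2(\omega)/F_N(\omega)=F_X(\omega)$, hence $Z\sim X$. Your explicit verification that $F_X^{2}(\omega)$ is always a valid characteristic function (as the characteristic function of a sum of two independent copies of $X$) is a small but worthwhile addition that the paper leaves implicit.
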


\begin{proof} 
It is straightforward to see from (\ref{main}) that, at $\beta=2$,  the characteristic functions must be identical $F_Z(\omega)=F_X(\omega)$ almost everywhere. Since the characteristic function uniquely determines the density \cite{billingsley2008probability}, $Z\sim X$. 
\end{proof}

\begin{remark}
Theorem \ref{th} demonstrates that there is indeed a rich set of source and channel densities, that makes the optimal mappings linear. Hence, Gaussianity assumption of the source and channel is not necessary to achieve the saddle point solution. 
\end{remark}

Let us next consider a case where the jammer does not need to know the density of the source, i.e., can perform optimally regardless of the source density. 

\begin{theorem}
At asymptotically low CSNR level, i.e., $\beta \rightarrow \infty$, for a Gaussian channel, optimal jamming strategy is to generate Gaussian noise independent of the source, regardless of the source density. 

\end{theorem}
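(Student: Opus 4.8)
The plan is to specialize the matching condition (\ref{main}) of Theorem \ref{th3} to a Gaussian channel and extract the $\beta \to \infty$ limit by a central-limit-style expansion of the source characteristic function. First I would fix the natural reading of low CSNR: since the jammer power $P_A$ and the channel power $\sigma_N^2$ are held fixed, the limit $\beta = (P_A+\sigma_N^2)/P_T \to \infty$ corresponds to $P_T \to 0$, so that the product $\beta P_T = P_A+\sigma_N^2$ stays constant. For a Gaussian channel $F_N(\omega)=\exp(-\sigma_N^2\omega^2/2)$, and the optimal jamming characteristic function prescribed by Theorem \ref{th3} becomes $F_Z(\omega)=F_X^{\beta}(\alpha_T\omega)\,e^{\sigma_N^2\omega^2/2}$, with $\alpha_T=\sqrt{P_T/\sigma_X^2}$.

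The core step is to evaluate $\lim_{\beta\to\infty}F_X^{\beta}(\alpha_T\omega)$. Because $X$ is zero mean with finite variance $\sigma_X^2$, its characteristic function admits the second-order expansion $F_X(t)=1-\tfrac{1}{2}\sigma_X^2 t^2+o(t^2)$ as $t\to 0$. Substituting $t=\alpha_T\omega$ and using the key identity $\sigma_X^2\alpha_T^2=P_T$ gives $F_X(\alpha_T\omega)=1-\tfrac{1}{2}P_T\omega^2+o(P_T)$ for each fixed $\omega$, since $\alpha_T\to 0$. Note that $\sigma_X^2$ has already disappeared from the leading term, which is the analytic reason the conclusion will be independent of the source density. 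Taking logarithms (legitimate as $F_X(\alpha_T\omega)\to 1$) and multiplying by $\beta$ yields $\beta\log F_X(\alpha_T\omega)=\beta\bigl(-\tfrac{1}{2}P_T\omega^2+o(P_T)\bigr)\to -\tfrac{1}{2}(P_A+\sigma_N^2)\omega^2$, where the limit uses $\beta P_T=P_A+\sigma_N^2$ and $\beta\,o(P_T)\to 0$. Hence $F_X^{\beta}(\alpha_T\omega)\to\exp\bigl(-\tfrac{1}{2}(P_A+\sigma_N^2)\omega^2\bigr)$, the characteristic function of $\mathcal N(0,P_A+\sigma_N^2)$.

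Dividing by the Gaussian $F_N$ collapses the residual variance, giving $F_Z(\omega)\to\exp(-\tfrac{1}{2}P_A\omega^2)$, exactly the characteristic function of $\mathcal N(0,P_A)$. By the L\'evy continuity theorem this pointwise convergence of characteristic functions yields convergence in distribution, and by uniqueness of the characteristic-function-to-density correspondence \cite{billingsley2008probability} the limiting optimal jamming law is Gaussian with variance $P_A$, independent of $f_X$ beyond its finite second moment; the transmitter and receiver of Theorem \ref{th3} are unchanged. This establishes the claim.

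The step I expect to be the main obstacle is controlling the remainder in the expansion: I must verify that $\beta\,o(P_T)\to 0$ genuinely, i.e., that the $o(t^2)$ term in the characteristic-function expansion is strong enough that multiplying by $\beta\sim 1/P_T$ does not revive it. This is precisely the estimate underlying the characteristic-function proof of the central limit theorem and requires only the already-assumed hypothesis $\sigma_X^2<\infty$; the Gaussianity of the channel is what makes the denominator $F_N$ cancel cleanly and leaves a Gaussian of the correct residual variance $P_A$.
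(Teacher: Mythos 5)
Your proposal is correct and follows essentially the same route as the paper: both arguments specialize the matching condition $F_Z(\omega)F_N(\omega)=F_X^{\beta}(\alpha_T\omega)$ and observe that, as $\beta\to\infty$, the right-hand side converges to a Gaussian characteristic function by a central-limit-theorem argument, after which the Gaussian $F_N$ divides out to leave $Z\sim\mathcal N(0,P_A)$. The only difference is that you carry out the second-order expansion and the $\beta\,o(P_T)\to 0$ estimate explicitly, whereas the paper simply cites the central limit theorem.
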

\begin{proof}
As we have shown in the proof of Theorem 4, the jammer's aim is to force the transmitter and the receiver to use linear mappings. Hence, the matching jamming noise (if exists) satisfies the following: 
\begin{equation}
F_Z(\omega)F_N(\omega)=F_X^{\beta}(\alpha_T\omega).
\label{upp}
\end{equation}
As $\beta \rightarrow \infty$, RHS of (\ref{upp}) converges to Gaussian characteristic function, due to central limit theorem \cite{billingsley2008probability}, and hence (\ref{main}) is asymptotically satisfied. 
\end{proof}

Another interesting case is the high CSNR level ($\beta\rightarrow 0$)  and Gaussian source case where any jammer output $Z$, independent of the source, is asymptotically optimal regardless of the noise density. 
\begin{theorem}
At an asymptotically high CSNR level, i.e., $\beta \rightarrow 0$, for a Gaussian source, optimal jamming strategy is to generate noise independent of the source regardless of the noise density. 
\end{theorem}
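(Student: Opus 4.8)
The plan is to mirror, in dual form, the argument used for the low-CSNR case, replacing the central limit theorem by the asymptotic linearity of the optimal estimator for a Gaussian source. As in that proof, I would begin by recalling from Lemma \ref{mainlemma} that no jammer can induce a distortion exceeding $D_L=\frac{\sigma_X^2(P_A+\sigma_N^2)}{P_T+P_A+\sigma_N^2}$, and that a jammer is optimal precisely when it drives the optimal transmitter and receiver to be linear, thereby attaining $D_L$. Hence it suffices to show that, for a Gaussian source at high CSNR, \emph{every} jammer noise $Z$ that is independent of the source (and uses the full power, $\mathbb E\{Z^2\}=P_A$) asymptotically forces the optimal mappings to be linear, irrespective of the channel density $f_N(\cdot)$.

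Next I would fix an arbitrary independent jammer and reduce the situation to the estimation/communication setting of Section III. With the randomized linear transmitter in place, the decoder observes $U=\alpha_T X+(Z+N)$, i.e.\ a Gaussian source corrupted by the effective noise $Z+N$, whose variance is $P_A+\sigma_N^2<\infty$. The effective signal-to-noise ratio is $P_T/(P_A+\sigma_N^2)=1/\beta$, so the limit $\beta\to 0$ is exactly the high-SNR regime. I would then invoke the asymptotic linearity result for optimal estimators recalled in Section III.A: for a Gaussian source, the MMSE estimator becomes linear as the SNR grows without bound, \emph{regardless of the noise distribution}. This is the precise dual of the Gaussian-channel, low-SNR statement established via the central limit theorem in the preceding theorem, and it is what replaces the CLT step here.

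Combining this with Theorem \ref{simultenous}, the optimal encoder must be asymptotically linear whenever the optimal decoder is; consequently the best transmitter--receiver response to any such independent jammer converges to the linear pair, and the achieved distortion converges to $D_L$. Since $D_L$ is the largest distortion any jammer can induce, every independent $Z$ of the prescribed power is asymptotically a best response for the jammer, so the matching condition (\ref{main}) is met in the limit without any constraint on $f_N(\cdot)$, which establishes the claim.

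The obstacle I anticipate is quantitative rather than conceptual: the linearity result of Section III.A is an \emph{asymptotic} statement for a fixed source--noise pair, whereas here the effective noise $Z+N$ ranges over an arbitrary family as the jammer varies. To make the conclusion fully rigorous one would need to control the gap $D_L-D$ as $\beta\to 0$ for each fixed jammer, and to verify that finiteness of the variance of $Z+N$ is all the high-SNR linearity requires. I would handle this by bounding the gap in terms of the deviation of the optimal estimator from its linear counterpart and showing that this deviation vanishes under the Gaussian-source, high-SNR hypothesis, exactly paralleling the role played by (\ref{upp}) in the low-CSNR proof.
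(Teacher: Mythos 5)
Your proposal reaches the right conclusion and is close in spirit to the paper, but it is routed differently. The paper's proof is a one-line verification of the matching condition: it rewrites (\ref{main}) as $(F_Z(\omega)F_N(\omega))^{1/\beta}=F_X(\alpha_T\omega)$ and observes that, as $\beta\to 0$, the left-hand side converges to a Gaussian characteristic function (the same central-limit mechanism as in the low-CSNR theorem, now applied to the $1/\beta$-fold power of $F_ZF_N$), which coincides with the right-hand side precisely because the source is Gaussian. You instead import the high-SNR asymptotic-linearity result for MMSE estimation of a Gaussian source recalled in Section III.A and then try to lift it to the joint encoder--decoder problem. Your framing via Lemma \ref{mainlemma} (every independent jammer asymptotically attains the cap $D_L$, hence is asymptotically a best response) is a legitimate and arguably more operational way to phrase the claim, and your closing caveat about uniformity over the family of effective noises $Z+N$ as $\beta\to 0$ is a real issue that the paper itself glosses over.

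The one step that does not work as written is the appeal to Theorem \ref{simultenous}. What your estimation argument gives you is that the best \emph{decoder for a fixed linear encoder} is asymptotically linear. Theorem \ref{simultenous} says that the \emph{jointly optimal} pair is either both linear or both nonlinear; it is not a device for promoting decoder-side linearity under a fixed encoder to joint optimality of the linear pair, and in particular it does not by itself rule out a nonlinear encoder--decoder pair that strictly beats $D_L$. The correct bridge is Theorem 3: the matching condition $F_X(\alpha_T\omega)=F_{Z+N}^{\kappa}(\omega)$ with $\kappa=1/\beta$ is necessary and sufficient for the linear pair to be jointly optimal, and it is exactly this condition that the paper verifies asymptotically in (\ref{upp2}). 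Replacing your Theorem \ref{simultenous} step with that verification closes the gap and essentially reduces your argument to the paper's.
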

\begin{proof}
Again, the matching jamming noise (if exists) must satisfy 
\begin{equation}
(F_Z(\omega)F_N(\omega))^{\frac{1}{\beta}}=F_X(\alpha_T\omega).
\label{upp2}
\end{equation}
As $\beta \rightarrow 0$, LHS of (\ref{upp2}) converges to the Gaussian characteristic function and,  hence (\ref{main}) is asymptotically satisfied. 
\end{proof}

\section{The Non-matching Case}
What is the optimal jammer density $f_Z(\cdot)$,  when the jammer cannot make the optimal mappings linear, i.e., $F_X^{\beta}(\omega)$ is not a valid characteristic function? In the following, we first examine the case of the basic estimation setting, then extend our analysis to jamming setting.

\subsection{Estimation Setting}
The problem of interest is open, to our best knowledge, even in the more fundamental setting, i.e., for estimation problem depicted in Figure 2. Particularly, we are interested in the noise density $f_Z(\cdot)$ that maximizes minimum mean square error, $\mathbb E ((X-\mathbb E (X|U) )^2)$. Clearly, if $F_X^{\beta}(\omega)$ is a valid characteristic function, the worst-case noise will have the characteristic function $F_Z(\omega)=F_X^{\beta} (\omega)$ and make the optimal (MMSE) estimator linear. Intuitively, it is expected that in the case where  $F_X^{\beta}(\omega)$ is  {\it not} a valid characteristic function,  worst-case noise would be the one that forces the optimal estimator as close to linear as possible in some sense. In the following, we derive results, based on optimal estimation which show in what precise sense this intuition is correct. Let us restate, using Bayes' rule, the optimal estimator $h(u)=\mathbb E\{X|U=u\}$ as:
       \begin{equation}
        {h(u)}= \frac{\int {x}  \,  f_X( {x})  \,  f_{Z} ({u-x})\, {dx} } { \int  \,     f_X({ x})  \,  f_{Z}({u-x})  \,dx}
        \end{equation}
        which can also be written as:
           \begin{equation}
        {h(u)}=\frac{\int {F_X'(\omega)F_Z(\omega) e^{ju\omega}{d\omega} }} { \int  \,   F_X(\omega)F_Z(\omega)   e^{ju\omega}\,{d\omega}}  \label{a}
        \end{equation}
       We then replace $F_Z(\omega) $ and $F_X(\omega) $ with their polynomial expansions, particularly Gram-Charlier expansion over the Gaussian density (see e.g. \cite{cramer1999mathematical} for details): 
          \begin{align}
          F_Z(\omega)& = \sum \limits_{m=0}^{M} \left (1+\frac{\alpha_m }{m!}(jw)^m\right) e^{-\sigma_Z^2 w^2/2}\label{a1}  \\
           F_X(\omega) &= \sum \limits_{m=0}^{M} \left (1+\frac{\theta_m}{m!}(jw)^m \right) e^{-\sigma_X^2 w^2/2} \label{a2}
           \end{align}
           where $\alpha_m$ and $\theta_m$ are the polynomial coefficients associated with $ F_Z(\omega)$ and  $F_X(\omega)$ respectively. Observe that plugging (\ref{a1}) and (\ref{a2}) in (\ref{a}),   the  optimal estimator can be expressed as a ratio of two polynomials:
        \begin{align}
        {h(u)}= \frac{P_a(u)}{P(u)} .
        \end{align}
Let $P_n(u)$ be a sequence of polynomials orthonormal with respect to $P(u)$ (note that $P(u)$ is a probability density function, particularly it is $f_U(\cdot)$, i.e.,  the density of $U=X+Z$.) 
  \begin{equation} 
  \int {P_k(u)}{P_m(u)}{P(u)}du=\delta(m,k) \label{eqq}\end{equation}
  Then $h(u)$ can expanded in terms of ${P_m(u)}$
\begin{equation}
{h(u)}= \sum  \limits_{m=0}^{M}c_m{P_m(u)}
\end{equation}
                   where 
   \begin{equation}    
     c_m=\int P_m(u) P_a(u) du .
\end{equation}
        Then, MMSE  is
     \begin{align}      
     J=&\mathbb E((X-\mathbb E(X|U))^2)  \\
     =&\mathbb E(X^2)- (\mathbb E(X|U))^2 \label{eq1}\\
      =& \sigma_X^2- \sum \limits_{m=0}^{m} c_m^2.\label{eq3}
            \end{align}     
            where (\ref{eq1}) follows from the orthogonality principle, and (\ref{eq3}) follows from (\ref{eqq}). 
  Worst case noise  aims to maximize $J$ and hence minimize  $\sum  \limits_{m=0}^{M}c_m^2$. 
  Observe that $c_0=0$ and $c_1=\sqrt{ \frac{\sigma_X^2}{\sigma_X^2+\sigma_Z^2}}$. These two coefficients clearly are determined by the second order statistics of the source and the noise, while higher order coefficients, i.e., $c_m, m\geq2$ depend on the higher order statistics. Note also that the polynomials associated with these coefficients are $P_0(u)=1$ and $P_1(u)=\sqrt {\frac{\sigma_X^2}{\sigma_X^2+\sigma_Z^2}}u$. We present our main result regarding this setting. 
  
  \begin{lemma}
  The worst-case noise for the estimation setting minimizes $\sum \limits_{m=0}^{M} c_m^2$, where $c_m$ are the coefficients of the orthonormal polynomial expansion with measure $f_Y(\cdot)$. 
        \end{lemma}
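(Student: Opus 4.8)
The plan is to recognize that this lemma merely crystallizes the chain of identities developed immediately above it, so the proof amounts to assembling those steps into a clean statement about the minimum mean-square error (MMSE). The target identity is $J = \sigma_X^2 - \sum_{m=0}^{M} c_m^2$, after which the worst-case (maximum-MMSE) noise is, by inspection, the one that minimizes $\sum_{m=0}^{M} c_m^2$, since $\sigma_X^2$ is fixed. So the real work is to justify each transition that leads to this decomposition, and to identify precisely which coefficients the adversary actually controls.

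First I would start from the Bayes form of the optimal estimator $h(u) = \mathbb E\{X \mid U = u\}$ and rewrite it in the Fourier domain as in (\ref{a}), so that the source and noise enter only through their characteristic functions $F_X(\omega)$ and $F_Z(\omega)$. Next I would substitute the Gram--Charlier expansions (\ref{a1})--(\ref{a2}), in which each characteristic function is a polynomial in $(j\omega)$ multiplied by a Gaussian factor. The key structural observation is that the inverse Fourier transform of (polynomial in $j\omega$)$\times$(Gaussian characteristic function) is (polynomial in $u$)$\times$(Gaussian density), via the Hermite-type relations between derivatives of a Gaussian and Hermite polynomials. Because the numerator (carrying $F_X'(\omega)F_Z(\omega)$) and the denominator (carrying $F_X(\omega)F_Z(\omega)$) share the same Gaussian variance $\sigma_X^2 + \sigma_Z^2$, that Gaussian factor cancels in the ratio, leaving exactly $h(u) = P_a(u)/P(u)$ with $P(u) = f_U(u)$ the density of the observation $U = X + Z$.

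Then I would introduce the orthonormal polynomials $\{P_m\}$ with respect to the measure $f_U(u)\,du$, as in (\ref{eqq}), and expand $h$ in this basis. The coefficient extraction is the clean part: $c_m = \int h(u) P_m(u) f_U(u)\,du = \int \frac{P_a(u)}{f_U(u)} P_m(u) f_U(u)\,du = \int P_a(u) P_m(u)\,du$, which matches the stated $c_m$. The MMSE then follows from the orthogonality principle, $\mathbb E\{(X - \mathbb E\{X\mid U\})^2\} = \sigma_X^2 - \mathbb E\{(\mathbb E\{X\mid U\})^2\}$, together with Parseval in the orthonormal basis, $\mathbb E\{h(U)^2\} = \int h^2 f_U\,du = \sum_{m=0}^{M} c_m^2$; this produces (\ref{eq3}). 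Since $c_0 = 0$ and $c_1$ is pinned by the second-order statistics $\sigma_X^2,\sigma_Z^2$, the adversary can act only through $c_m$ for $m \geq 2$, i.e.\ through the higher-order coefficients $\alpha_m$ of $F_Z$, and maximizing $J$ is equivalent to minimizing $\sum_{m} c_m^2$, which is the assertion.

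I expect the main obstacle to be the rigorous justification of the ratio-of-polynomials step rather than the final bookkeeping. One must argue that the Gram--Charlier series can be truncated at order $M$ consistently for both $F_X$ and $F_Z$, that the inverse transforms and the $\omega$-integrals may be interchanged, and that the Gaussian factors cancel exactly so that $h$ is genuinely a rational function with polynomial numerator and denominator. A secondary technical point is the existence and well-posedness of the orthonormal family $\{P_m\}$ for the measure $f_U$ (finite moments of all needed orders, non-degenerate support), which is what guarantees (\ref{eqq}) and hence the Parseval step; these are the places where additional regularity assumptions on the source and noise densities would be invoked.
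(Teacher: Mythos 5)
Your proposal is correct and follows essentially the same route as the paper: the paper gives no separate proof of this lemma, which simply summarizes the derivation immediately preceding it (Bayes/Fourier form of the optimal estimator, Gram--Charlier expansions, expansion of $h$ in polynomials orthonormal under the observation density $f_U$, the orthogonality principle, and $J=\sigma_X^2-\sum_{m} c_m^2$), and your argument reconstructs exactly that chain, including the observation that $c_0$ and $c_1$ are fixed by second-order statistics. The technical caveats you flag (cancellation of the Gaussian factors in the ratio, interchange of integrals, well-posedness of the orthonormal family) are likewise left implicit in the paper.
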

        
Given the source density, we can find the optimal $M^{th}$ order polynomial approximation to the optimal estimator used in conjunction with the worst-case noise. In the following, we focus on finding the worst-case noise, that matches this estimator. 

Let us assume $h(u)= \sum  \limits_{m=0}^{M} b_m u^m$ for $b_m\in \mathbb R$. Then, the following holds: 

       \begin{equation}
\sum  \limits_{m=0}^{M}b_mu^m= \frac{\int {x}  \,  f_X( {x})  \,  f_{Z} ({u-x})\, {dx} } { \int  \,     f_X({ x})  \,  f_{Z}({u-x})  \,dx}.
\label{upeq1}
        \end{equation}
Expanding (\ref{upeq1}), and expressing integrals as convolutions, we have 
       \begin{equation}
\sum  \limits_{m=0}^{M} b_mu^m   (   f_X({ u})  \ast  f_{Z}({u})) =(uf_X(u))  \ast  f_{Z} ({u}) .
\label{upeq2}
        \end{equation}
        Taking the Fourier transforms of both sides, we obtain 
               \begin{equation}
\sum  \limits_{m=0}^{M} b_m  \frac{d^m}{d\omega^m} (   F_X({ \omega})  F_Z(\omega)) =F_X'(\omega)   F_{Z} ({\omega}) 
\label{upeq3}
        \end{equation}
Hence, given the optimal estimator, we can find the worst-case noise by solving the differential equation given in (\ref{upeq3}).

\subsection{Jamming Setting}
 Let us focus on the original problem of jamming. We carry a similar analysis to derive the best  $M^{th}$ order polynomial expansion of the decoder, given the encoder. For simplicity, we assume the the transmitter function is linear, i.e., as given (\ref{rand3}).  Note however that, as Theorem \ref{simultenous} implies, if the optimal decoder is nonlinear so must be the encoder. Hence, this approach will yield an approximate solution. 
 
 Towards deriving the optimal approximation of the decoder, we again express the decoder as 
 
   \begin{equation}
        {h(u)}= \frac{\int {x}  \,  f_X( {x})  \,  f_{N+Z} ({u-\alpha_Tx})\, {dx} } { \int  \,     f_X({ x})  \,  f_{N+Z}({u-\alpha_T x})  \,dx}
 \end{equation}
 where $\alpha_T=\sqrt{\frac{P_T}{\sigma_X^2}}$ and $f_{N+Z}$ is the density of $N+Z$. Nothing that $X$ and $Z$ are independent, we have 
         \begin{equation}
        {h(u)}=\frac{\int {F_X'(\alpha_T\omega)F_N(\omega) F_Z(\omega) e^{ju\omega}{d\omega} }} { \int  \,   F_X(\alpha_T\omega)F_N(\omega)F_Z(\omega)   e^{ju\omega}\,{d\omega}}  \label{a}
        \end{equation}
        which implies that, plugging the appropriate polynomial expression for $F_X(\omega)$, $F_N(\omega)$ and $F_Z(\omega)$, we can express $h(u)$ as the ratio of two polynomials, i.e., 
        \begin{align}
        {h(u)}= \frac{P_a(u)}{P(u)} .
                \end{align}
Again, expanding $h(u)$ the polynomials which are orthonormal under the measure $P(u)$ (which is the density of $\alpha_T X+Z+N$), and  following the same steps that led to (\ref{eq3}), we obtain 
\begin{equation}
J=\sigma_X^2-\sum \limits_{m=0}^{m} c_m^2
\end{equation}
where $c_m$'s are the coefficients of the polynomials that are orthonormal with respect to the density of the channel output $U=\alpha_T X+Z+N$. Hence, we can characterize the optimal jamming density, as the one that minimizes $\sum \limits_{m=0}^{m} c_m^2$. Similar to estimation setting, once the best polynomial approximation is found, the optimal jamming density can be obtained by solving a differential equation which can be obtained following the same steps that yield (\ref{upeq3}). 



\section{Discussion}
 In this paper, we studied the problem of optimal zero-delay jamming over an additive noise channel. Utilizing the recent results on conditions for linearity of optimal estimation, and of optimal mappings in source-channel coding, we obtained the saddle-point solution to the jamming problem for general sources and channels. We showed that linearity is essential in the  jamming problem, in the sense that the optimal jamming strategy is to effectively force both transmitter and receiver to linear mappings. We analyzed conditions and general settings where such strategy can indeed be achieved by the jammer, and provided a ``matching condition"  which strictly subsumes the prior results specialized to all Gaussian settings.  Finally, we provided a procedure to approximate optimal jamming in the cases where the jammer cannot impose linearity on the transmitter and the receiver.

Analysis in this paper is limited to scalar sources and channels. An important extension of this work, currently under investigation, will be on vector sources and/or channels. Another line of future work involves the precise characterization of jamming noise in the non-matching case. 



\bibliographystyle{IEEEbib}

\bibliography{ref}

\end{document}